\newcommand{\pcite}[1]{\citeauthor{#1}'s \citeyearpar{#1}}
\def\baro{\vskip  .2truecm\hfill \hrule height.5pt \vskip  .2truecm}
\def\barba{\vskip -.1truecm\hfill \hrule height.5pt \vskip .4truecm}
\newtheorem{theorem}{Theorem}
\newtheorem{remark}[theorem]{Remark}
\newtheorem{proposition}[theorem]{Proposition}
\newcommand{\X}{{\mathsf{X}}}
\newcommand{\Y}{{\mathsf{Y}}}
\newcommand{\Z}{{\mathsf{Z}}}
\newcommand{\by}{{\boldsymbol{y}}}
\newcommand{\bth}{{\boldsymbol{\theta}}}
\newcommand{\norm}[1]{\lVert#1\rVert}
\newcommand{\bnorm}[1]{\big\lVert#1\big\rVert}
\newcommand{\df}{\mathrm{d}}
\title{\bf On the convergence rate of the ``out-of-order'' \\ block
  Gibbs sampler}
\author{Zhumengmeng Jin and James P. Hobert \\ Department of Statistics
	\\ University of Florida} 
\date{October 2021}
\keywords{Geometric ergodicity, geometric rate of convergence, Markov
  chain, Markov chain Monte Carlo, mixing time, total variation distance}
\begin{document}
\maketitle

\begin{abstract}
  It is shown that a seemingly harmless reordering of the steps in a
  block Gibbs sampler can actually invalidate the algorithm.  In
  particular, the Markov chain that is simulated by the
  ``out-of-order'' block Gibbs sampler does not have the correct
  invariant probability distribution.  However, despite having the
  wrong invariant distribution, the Markov chain converges at the same
  rate as the original block Gibbs Markov chain.  More specifically,
  it is shown that either both Markov chains are geometrically ergodic
  (with the same geometric rate of convergence), or neither one is.
  These results are important from a practical standpoint because the
  (invalid) out-of-order algorithm may be easier to analyze than the
  (valid) block Gibbs sampler (see, e.g., \cite{yang:rose:2019}).
\end{abstract}

\section{Introduction}
\label{sec:intro}

Let $\Pi(\df x,\df y,\df z)$ be a joint probability distribution having support
$\X \times \Y \times \Z$, and suppose we wish to construct a Gibbs
sampler for this distribution.  (Additional mathematical rigor will be
introduced in Section~\ref{sec:main}.)  The simplest version of the
Gibbs sampler is that which cycles through the so-called full
conditional distributions, $\Pi_{X|YZ}(\df x|y,z)$, $\Pi_{Y|XZ}(\df
y|x,z)$, and $\Pi_{Z|XY}(\df z|x,y)$.  Let $\Gamma =
\{(X_n,Y_n,Z_n)\}_{n=0}^\infty$ denote the corresponding Markov chain.
If the current state of the chain $\Gamma$ is $(X_n,Y_n,Z_n) =
(x,y,z)$, then we move to the next state, $(X_{n+1},Y_{n+1},Z_{n+1})$,
using the following well-known three step procedure.

\newpage

\baro \vspace*{2mm}
\noindent {\rm Iteration $n+1$ of the Gibbs sampler:}
\begin{enumerate}
\item Draw $X_{n+1} \sim \Pi_{X|YZ}(\cdot|y,z)$, and call the observed value $x'$.
\item Draw $Y_{n+1} \sim \Pi_{Y|XZ}(\cdot|x',z)$, and call the observed value $y'$.
\item Draw $Z_{n+1} \sim \Pi_{Z|XY}(\cdot|x',y')$.
\end{enumerate}
\vspace*{-3.5mm}
\barba 

Now suppose that we have the ability to draw from the distribution
$\Pi_{X|Z}(\df x|z)$.  This leads to a \textit{sequential} method of
making draws from $\Pi_{XY|Z}(\df x,\df y|z)$.
% = \pi(x,y,z)/\pi_Z(z)$.
Indeed, we first draw from $\Pi_{X|Z}(\df x|z)$, and then from
$\Pi_{Y|XZ}(\df y|x,z)$.  Hence, we can consider an alternative
algorithm, the so-called block Gibbs sampler, in which $X$ and $Y$ are
sampled \textit{jointly} given $Z$.  It is well established that
sampling a set of variables jointly in this way often leads to a more
efficient algorithm.  Denote the block Gibbs Markov chain by
$\tilde{\Gamma} = \{\big( (\tilde{X}_n,\tilde{Y}_n),\tilde{Z}_n
\big)\}_{n=0}^\infty$.  If the current state is $\big(
(\tilde{X}_n,\tilde{Y}_n),\tilde{Z}_n \big) = ((x,y),z)$, then we move
to the next state $\big(
(\tilde{X}_{n+1},\tilde{Y}_{n+1}),\tilde{Z}_{n+1} \big)$ using the
following three step procedure.

\baro \vspace*{2mm}
\noindent {\rm Iteration $n+1$ of the block Gibbs sampler:}
\begin{enumerate}
\item Draw $\tilde{X}_{n+1} \sim \Pi_{X|Z}(\cdot|z)$, and call the observed value $x'$.
\item Draw $\tilde{Y}_{n+1} \sim \Pi_{Y|XZ}(\cdot|x',z)$, and call the observed value $y'$.
\item Draw $\tilde{Z}_{n+1} \sim
  \Pi_{Z|XY}(\cdot|x', y')$.
\end{enumerate}
\vspace*{-3.5mm}
\barba

The target distribution, $\Pi(\df x,\df y,\df z)$, is the invariant probability
distribution for both $\Gamma$ and $\tilde{\Gamma}$.  (The Markov
chains considered in this section are assumed to satisfy standard
regularity conditions - spelled out in Section~\ref{sec:main} - that
imply, among other things, a unique invariant distribution.)
Therefore, the Gibbs sampler and the block Gibbs sampler are both
valid Markov chain Monte Carlo (MCMC) algorithms for exploring
$\Pi(\df x,\df y,\df z)$.  There is, however, a very important difference
between these two algorithms regarding the ordering of the steps.  The
three steps of the regular Gibbs sampler are interchangeable in the
sense that they can be reordered without affecting the validity of the
algorithm.  In other words, instead of using the order $X \rightarrow
Y \rightarrow Z$, any of the six possible orderings could be used, and
all will result in a Markov chain with invariant distribution $\Pi(\df
x,\df y,\df z)$.  This is not the case, however, with the block Gibbs
sampler.  Indeed, the first two steps of the block Gibbs sampler must
be done in order because the same value of $Z$ must be conditioned on
in both.  So, we could actually move the third step to the top, and
still have a valid algorithm, but any other change in the order of the
steps will invalidate the algorithm.  To be more specific, let
$\Gamma^* = \{(Y^*_n,Z^*_n,X^*_n)\}_{n=0}^\infty$ denote the Markov
chain associated with what we call the ``out-of-order'' block Gibbs
sampler.  If the current state is $(Y^*_n,Z^*_n,X^*_n) = (y,z,x)$,
then we move to the next state $(Y^*_{n+1},Z^*_{n+1},X^*_{n+1})$ using
the following three step procedure.

\baro \vspace*{2mm}
\noindent {\rm Iteration $n+1$ of the out-of-order block Gibbs
  sampler:}
\begin{enumerate}
\item Draw $Y^*_{n+1} \sim \Pi_{Y|XZ}(\cdot|x,z)$, and call the observed value $y^\ast$.
\item Draw $Z^*_{n+1} \sim \Pi_{Z|XY}(\cdot|x, y^\ast)$, and call the observed value $z^\ast$.
\item Draw $X^*_{n+1} \sim \Pi_{X|Z}(\cdot|z^\ast)$.
\end{enumerate}
\vspace*{-3.5mm}
\barba 

At first glance, this reordering may seem perfectly harmless, but, in
reality, it invalidates the algorithm.  Indeed, $\Gamma^*$ does not
have the correct invariant distribution.  Again, the reason is that
the two steps in the block Gibbs sampler that together yield a draw
from $\Pi_{XY|Z}(\df x, \df y|z)$ have been separated, so we are no
longer simulating from $\Pi_{XY|Z}(\df x,\df y|z)$.  To see that the
invariant distribution has changed, first note that the Markov
transition kernel (Mtk) of $\Gamma^*$ is given by
\[
K^{\ast} \big( (y,z,x), (\df y',\df z',\df x') \big) =
\Pi_{X|Z} (\df x'|z') \,
\Pi_{Z|XY}(\df z'|x,y') \,
\Pi_{Y|XZ}(\df y'|x,z) \;.
\]

\begin{remark}
  Note that $K^{\ast}\big( (y,z,x), (\df y',\df z',\df x') \big)$ does
  not actually depend on $y$.  In the sequel, we drop such variables
  from the notation when it's convenient.
\end{remark}

\noindent Now defining
$\Pi^{\ast} (\df x,\df y,\df z) = \Pi_{X|Z}(\df x|z) \Pi_{YZ}(\df y,\df z)$,
we have
\begin{align*}
  & \int_{\mathsf{X}} \int_{\mathsf{Y}} \int_{\mathsf{Z}}
  K^{\ast}\big( (y,z,x), (\df y',\df z',\df x') \big) \, \Pi^*(\df x,\df y,\df z) \\
  = & \Pi_{X|Z}(\df x'|z') \int_{\mathsf{X}} \int_{\mathsf{Y}} \int_{\mathsf{Z}}
    \Pi_{Z|XY}(\df z'|x,y') \,
    \Pi_{Y|XZ}(\df y'|x,z) \,
    \Pi_{X|Z}(\df x|z) \,
    \Pi_{YZ}(\df y,\df z) \\
  = &  \Pi_{X|Z}(\df x'|z') \int_{\mathsf{X}} \int_{\mathsf{Z}}
       \Pi_{Z|XY}(\df z'|x,y') \,
       \Pi_{Y|XZ}(\df y'|x,z) \,
       \Pi_{XZ}(\df x,\df z)  \\
  = & \Pi_{X|Z}(\df x'|z') \int_{\mathsf{X}}
      \Pi_{Z|XY}(\df z'|x,y') \,
      \Pi_{XY}(\df x,\df y') \\
  = & \Pi_{X|Z}(\df x'|z') \,  \Pi_{YZ}(\df y',\df z') \\
  = & \Pi^*(\df x',\df y',\df z')    \;.
\end{align*}
Hence, the invariant distribution of $\Gamma^*$ is $\Pi^*(\df x,\df
y,\df z)$, not the target distribution $\Pi(\df x,\df y,\df z)$.  A
little thought reveals that basing an MCMC algorithm on $\Gamma^*$ is
effectively the same as simulating the correct chain, $\tilde{\Gamma}
= \{\big( (\tilde{X}_n,\tilde{Y}_n),\tilde{Z}_n \big)\}_{n=0}^\infty$,
but then basing inferences on the \textit{shifted} version $\{ (
\tilde{Y}_n, \tilde{Z}_n, \tilde{X}_{n+1} )\}_{n=0}^\infty$.  Since
$\Pi$ and $\Pi^*$ share many of the same marginal and conditional
distributions, some (MCMC) estimators based on $\Gamma^*$ will be
valid, but others will not.  For example, since (in general)
$\Pi^*_{XY}(\df x,\df y) = \int_{\Z} \Pi_{X|Z}(\df x|z) \Pi_{YZ}(\df
y,\df z) \ne \Pi_{XY}(\df x,\df y)$, the $\Gamma^*$-based estimator of
the expectation of $g(X,Y)$ with respect to $\Pi$, that is, $n^{-1}
\sum_{i=0}^{n-1} g(X_i^*,Y_i^*)$, will be inconsistent (assuming that
$g(X,Y)$ actually depends on \textit{both} $X$ and $Y$).

We now explain how our work on the out-of-order block Gibbs sampler
was motivated by an analysis in \citet{yang:rose:2019}.  Consider the
following simple random effects model:
\begin{equation}
  \label{eq:brem}
  y_i = \theta_i + e_i \,,
\end{equation}
for $i = 1,\dots,m$, where the components of $\bth =
(\theta_1,\dots,\theta_m)^T$ are iid $\mbox{N}(\mu,A)$, the components
of $\boldsymbol{e} = (e_1,\dots,e_m)^T$ are iid $\mbox{N}(0,V)$, and
$\bth$ and $\boldsymbol{e}$ are independent.  The error variance, $V$,
is assumed known.  The basic idea is that we will observe the $y_i$s,
and then attempt to make inferences about the unknown parameters, $A$
and $\mu$.  Consider a Bayesian statistical model in which $A$ and
$\mu$ are taken to be \textit{a priori} independent with
\begin{equation}
  \label{eq:bp}
  \pi(\mu) \propto 1 \;\;\; \mbox{and} \;\;\; A \sim \mbox{IG}(a, b)
  \;,
\end{equation}
where $a,b>0$,
and we say $W \sim \mbox{IG}(a, b)$ if its density is proportional to
$w^{-a-1} e^{-b/w} I_{(0,\infty)}(w)$.
Denote the
resulting posterior distribution as $\Pi(\df A,\df \mu,\df \bth \, |
\, \by)$, where $\by = (y_1,\dots,y_m)^T$ represents the observed
data.

\citet{rose:1996} derived and analyzed a block Gibbs sampler for
$\Pi(\df A,\df \mu,\df \bth \, | \, \by)$.  In terms of the general
block Gibbs sampler described above, $(A,\mu)$ plays the role of
$(X,Y)$, and $\bth$ plays the role of $Z$.  So $\X = (0,\infty)$, $\Y
= \mathbb{R}$ and $\Z = \mathbb{R}^m$.  Denote the Markov chain by
$\tilde{\Lambda} = \{\big((A_n,\mu_n),\bth_n \big)\}_{n=0}^\infty$.
If the current state is $\big( (A_n,\mu_n),\bth_n \big) = \big(
(A,\mu), \bth \big)$, then we move to the next state $\big(
(A_{n+1},\mu_{n+1}), \bth_{n+1} \big)$ using the following three step
procedure.

\baro \vspace*{2mm}
\noindent {\rm Iteration $n+1$ of Rosenthal's block Gibbs sampler:}
\begin{enumerate}
\item Draw $A_{n+1} \sim \mbox{IG} \Big( a + \frac{m-1}{2}, b +
  \frac{1}{2} \sum_{i=1}^m (\theta_i - \bar{\theta})^2 \Big)$.
\item Draw $\mu_{n+1} \sim \mbox{N} \Big( \bar{\theta},
  \frac{A_{n+1}}{m} \Big)$.
\item For $i=1,2,\dots,m$, draw the $i$th component of $\bth_{n+1}$
  from $\mbox{N} \Big( \frac{V \mu_{n+1} + A_{n+1} y_i}{A_{n+1}+V},
  \frac{A_{n+1} V}{A_{n+1}+V} \Big)$.
\end{enumerate}
\vspace*{-3.5mm}
\barba 

The first two steps of this algorithm constitute a joint draw from the
distribution of $(A,\mu)$ given $(\bth,\by)$, which we write as
$\Pi_1(\df A, \df \mu \, | \, \bth, \by)$.  In particular, if we
factor $\Pi_1$ as follows
\[
\Pi_1(\df A, \df \mu \, | \, \bth, \by) = \Pi_{21}(\df A \, | \, \bth,
\by) \Pi_{22}(\df \mu \, |\, A, \bth, \by) \;,
\]
then in the first step we draw $A_{n+1} \sim \Pi_{21}(\cdot \, | \,
\bth, \by)$, and in the second step we draw $\mu_{n+1} \sim
\Pi_{22}(\cdot \, | \, A_{n+1}, \bth, \by)$.  In the posterior
distribution, the components of $\bth$ are conditionally independent
given $(\mu, A, \by)$, which is why the third step of the algorithm
consists of $m$ univariate draws.

Recently, \citet{yang:rose:2019} set out to perform a more nuanced
analysis of \pcite{rose:1996} block Gibbs sampler, but ended up
analyzing the out-of-order version of \pcite{rose:1996} algorithm
instead.  Let $\Lambda^* = \{(\mu^*_n,\bth^*_n,A^*_n)\}_{n=0}^\infty$
denote their Markov chain.  If the current state is
$(\mu^*_n,\bth^*_n,A^*_n) = (\mu,\bth,A)$, then we move to the next
state $(\mu^*_{n+1},\bth^*_{n+1},A^*_{n+1})$ using the following three
step procedure.

\baro \vspace*{2mm}
\noindent {\rm Iteration $n+1$ of Yang \& Rosenthal's algorithm:}
\begin{enumerate}
\item Draw $\mu^*_{n+1} \sim \mbox{N} \big( \bar{\theta}, \frac{A}{m}
  \big)$.
\item For $i=1,2,\dots,m$, draw the $i$th component of $\bth^*_{n+1}$
  from $\mbox{N} \Big( \frac{V \mu^*_{n+1} + A y_i}{A+V}, \frac{A
    V}{A+V} \Big)$.
\item Draw $A^*_{n+1} \sim \mbox{IG} \Big( a + \frac{m-1}{2}, b +
  \frac{1}{2} \sum_{i=1}^m (\theta^*_{n+1,i} - \bar{\theta}_{n+1}^*)^2
  \Big)$.
\end{enumerate}
\vspace*{-3.5mm}
\barba

It appears that \citet{yang:rose:2019} changed the order of the steps
in \pcite{rose:1996} block Gibbs sampler because the resulting Markov
chain was easier to analyze.  However, they were apparently unaware
that the modified algorithm they analyzed is not a valid MCMC
algorithm for the target posterior distribution $\Pi(\df A,\df \mu,\df
\bth \, | \, \by)$.  Thus, without supplementary results relating
$\tilde{\Lambda}$ and $\Lambda^*$, the convergence results developed
by \citet{yang:rose:2019} are not entirely germane to the exploration
of $\Pi(\df A,\df \mu,\df \bth \, | \, \by)$.

In this note, we develop general results relating the convergence
behaviors of the (valid) block Gibbs Markov chain, $\tilde{\Gamma}$,
and the (invalid) out-of-order block Gibbs Markov chain, $\Gamma^*$.
In particular, we develop inequalities that relate the total variation
distance to stationarity for one Markov chain to that of the other,
and these bounds imply that the two chains mix at essentially the same
rate.  (Keep in mind that these two Markov chains have different
invariant distributions.)  We also show that $\tilde{\Gamma}$ is
geometrically ergodic if and only if $\Gamma^*$ is geometrically
ergodic, and that their convergence rates are the same.  Our results
demonstrate that one can perform convergence analysis of the block
Gibbs sampler \textit{indirectly} by analyzing the out-of-order block
Gibbs sampler, which may be easier to handle, as was apparently the
case for \citet{yang:rose:2019}.

\section{Main Results}
\label{sec:main}

We begin with some general state space Markov chain theory.  Let $E$
be a set, and let ${\cal E}$ be a countably generated $\sigma$-algebra
of subsets of $E$.  Let $P: E \times {\cal E} \rightarrow [0,1]$ be a
Mtk, and assume that the corresponding Markov chain is irreducible,
aperiodic and positive Harris recurrent.  (See \citet{meyn:twee:2009}
for definitions.)  Let $\Pi$ denote the unique invariant probability
measure, and let $P^n$ denote the $n$-step Mtk (so, as usual, $P
\equiv P^1$).  If $\mu$ is a probability measure on $E$, then we use
$\mu P^n(\cdot)$ to denote the probability measure $\int_E
P^n(u,\cdot) \mu(du)$.  Following \citet{numm:1984} and
\citet{tier:1994}, we say that the Markov chain is
\textit{geometrically ergodic} if there exist a non-negative extended
real-valued function $M$ with $\int_E M(u) \, \Pi(du) < \infty$ and a
$\rho \in [0,1)$ such that
\begin{equation}
  \label{eq:ge}
  \norm{P^n(u,\cdot) - \Pi(\cdot)} \le M(u) \rho^n
\end{equation}
for all $n \in \mathbb{N}$ and all $u \in E$.  Here, $\norm{\cdot}$
denotes the usual total variation norm for signed measures.  We define
the \textit{geometric convergence rate} of the chain, $\rho_*$, to be
the infimum over $\rho \in [0,1]$ such that \eqref{eq:ge} holds for
some non-negative extended real-valued function $M$, integrable with
respect to $\Pi$.  Clearly, the chain is geometrically ergodic if and
only if $\rho_* < 1$.

\begin{remark}
  This definition of geometric ergodicity is slightly less standard
  than an alternative definition in which $M$ is not assumed to be
  integrable with respect to $\Pi$, but is assumed to be finite
  $\Pi$-almost everywhere.  While \pcite{robe:rose:1997} Proposition
  2.1 shows that the two definitions are actually equivalent, the
  geometric convergence rates under the two definitions are not
  necessarily equal.
\end{remark}

A standard result concerning total variation distance is as follows
\citep[see, e.g.,][Proposition 3]{robe:rose:2004}.  If $\mu$ and $\nu$
are probability measures on $E$, then
\[
\norm{\mu(\cdot) - \nu(\cdot)} = \sup_{f:E \rightarrow [0,1]} \bigg|
\int f d\mu - \int f d\nu \bigg| \;.
\]
This fact will be used repeatedly in our proofs.

Now let $K$ denote the Mtk of the block Gibbs Markov chain,
$\tilde{\Gamma} = \{\big( (\tilde{X}_n,\tilde{Y}_n),\tilde{Z}_n
\big)\}_{n=0}^\infty$, described in the Introduction.  So we have
\[
K \big( (x,y,z) , (\df x',\df y',\df z') \big) = \Pi_{Z|XY}(\df z'\, |
\, x',y' ) \, \Pi_{XY|Z}( \df x',\df y' \, | \, z) \;.
\]
(We assume that
all Markov chains considered in this section
satisfy
the basic regularity conditions laid out at the start of this
section.)
It is well known that the sequences $\{
(\tilde{X}_n,\tilde{Y}_n) \}_{n=0}^\infty$ and $\{ \tilde{Z}_n
\}_{n=0}^\infty$ are themselves (reversible) Markov chains with
invariant distributions given by $\Pi_{XY}(\df x,\df y)$ and
$\Pi_Z(\df z)$, respectively (see, e.g., \cite{liu:wong:kong:1994}).
% \citep[see, e.g.,][]{liu:wong:kong:1994}.
Let $K_{XY}$ denote the Mtk of the $(X,Y)$-marginal chain, and define
$K_Z$ similarly.  For example,
\[
K_{XY} \big( (x,y), (\df x',\df y') \big)
= \int_{\Z} \Pi_{XY|Z}(\df x',\df y' \, |\, z) \,
\Pi_{Z|XY}(\df z \, | \, x,y) \;.
\]
As mentioned in the Introduction, there is one way to reorder the
steps in the block Gibbs sampler without invalidating the algorithm.
The corresponding Markov chain has Mtk given by
\[
K^\dagger\big( (z,x,y), (\df z',\df x',\df y') \big)
= \Pi_{XY|Z}(\df x',\df y' \,|\, z')
  \Pi_{Z|XY}(\df z'\,|\, x,y ) \;.
\]
All four of these Markov chains ($K$, $K^\dagger$, $K_{XY}$ and $K_Z$)
converge at the same rate \citep{robe:rose:2001,diac:khar:salo:2008}.
As in the Introduction, let $K^*$ denote the Mtk of the out-of-order
chain, so, again,
\[
K^* \big( (y,z,x), (\df y',\df z',\df x') \big)
= \Pi_{X|Z} (\df x'|z') \,
  \Pi_{Z|XY}(\df z'|x,y') \,
  \Pi_{Y|XZ}(\df y'|x,z) \;.
\]
Here is our first result.

\begin{proposition}
  \label{prop:first}
  Fix $z \in \Z$ and let $\nu_z$ denote the probability measure of the
  random vector $(X,Z)$ such that $Z=z$ w.p. 1, and the conditional
  distribution of $X$ given $Z$ is $\Pi_{X|Z}(\cdot|z)$.  Then
\begin{equation}\label{eq:tvr1}
\norm{K^n\big((x,y,z),\cdot \big) - \Pi(\cdot)} \le
\norm{K_Z^{n-1}(z,\cdot) - \Pi_Z(\cdot)} \le \norm{\nu_z
  K^{*(n-2)}(\cdot) - \Pi^{\ast}(\cdot)} \;.
\end{equation}
  Fix $(x,z) \in \X \times \Z$ and let $\nu_{(x,z)}$ denote the
  probability measure of the random vector $(X,Y)$ such that $X=x$
  w.p. 1, and the conditional distribution of $Y$ given $X$ is
  $\Pi_{Y|XZ}(\cdot|x,z)$.  Then
\begin{equation}\label{eq:tvr2}
  \norm{K^{*n}\big( (y,z,x), \cdot) - \Pi^*(\cdot)} \le
  \norm{\nu_{(x,z)} K_{XY}^{n-1}(\cdot) - \Pi_{XY}(\cdot)} \le \norm{\nu_{(x,z)}
    K^{\dagger(n-1)}(\cdot) - \Pi(\cdot)} \;.
\end{equation}
\end{proposition}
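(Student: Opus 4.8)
The plan is to prove all four inequalities from a single template: each bound compares the law of one chain at time $n$ with its stationary law, and in every case I would (i) factor the $n$-step law of the chain on the smaller side as the law of an auxiliary marginal chain run for fewer steps, followed by one additional (possibly partial) Markov kernel; (ii) factor the matching stationary law through that same additional kernel; and (iii) invoke the fact that applying any Markov kernel is a contraction in total variation, $\norm{\mu P - \lambda P} \le \norm{\mu - \lambda}$, which is immediate from the supremum representation of $\norm{\cdot}$ quoted just before the proposition (replace $f$ by $Pf$, still valued in $[0,1]$). Beyond this the only work is the careful identification of the relevant marginal chains.

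For the left inequality in \eqref{eq:tvr1} I would introduce the kernel $\bar{K}(z,\cdot) := K\big((\cdot,\cdot,z),\cdot\big)$ from $\Z$ to $\X\times\Y\times\Z$, which is well defined since $K\big((x,y,z),\cdot\big)$ depends on the conditioning state only through $z$. The block Gibbs update shows the $Z$-coordinate is itself Markov with kernel $K_Z$, so $K^n\big((x,y,z),\cdot\big) = K_Z^{n-1}(z,\cdot)\,\bar{K}$, while a one-line conditional computation gives $\Pi = \Pi_Z\,\bar{K}$; contraction then yields the bound. For the right inequality in \eqref{eq:tvr1} I would first show that the product form of $\nu_z$, namely $X \mid Z \sim \Pi_{X|Z}$, is preserved by $K^*$ and that on such measures $K^*$ advances the $Z$-marginal by exactly one application of $K_Z$. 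Hence $\nu_z K^{*(n-2)}$ retains this product form with $Z$-marginal $K_Z^{n-2}(z,\cdot)$, while $\Pi^*$ shares it with $Z$-marginal $\Pi_Z$. Defining the stochastic kernel $R\big((z,x),\df z'\big) := \int_{\Y} \Pi_{Z|XY}(\df z'|x,y')\,\Pi_{Y|XZ}(\df y'|x,z)$ (the first two sub-steps of $K^*$), one checks that $R$ sends any product-form measure with $Z$-marginal $\lambda$ to $\lambda K_Z$; thus $(\nu_z K^{*(n-2)})R = K_Z^{n-1}(z,\cdot)$ and $\Pi^* R = \Pi_Z$, and contraction closes the step.

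The bounds in \eqref{eq:tvr2} are handled symmetrically. For the left inequality I would track the out-of-order pair $W_k := (X^*_k, Y^*_{k+1})$ and verify from the definition of $K^*$ that it is Markov with kernel $K_{XY}$, started from $W_0 \sim \nu_{(x,z)}$; carrying out the final iteration of $K^*$ from $W_{n-1}$ defines a partial kernel $\bar{K}^*$ with $K^{*n}\big((y,z,x),\cdot\big) = (\nu_{(x,z)}K_{XY}^{n-1})\,\bar{K}^*$ and $\Pi^* = \Pi_{XY}\,\bar{K}^*$, after which contraction applies. For the right inequality I would note that the $(X,Y)$-coordinate of the valid reordered chain $K^\dagger$ is Markov with kernel $K_{XY}$ and does not depend on the $Z$-coordinate, so the $(X,Y)$-marginal of $\nu_{(x,z)}K^{\dagger(n-1)}$ equals $\nu_{(x,z)}K_{XY}^{n-1}$ and that of $\Pi$ equals $\Pi_{XY}$; applying the contracting coordinate-projection kernel gives the claim.

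I expect the main obstacle to be the bookkeeping of the shifted indices and the verification that each auxiliary process is genuinely Markov with the asserted kernel, in particular the product-form invariance $X \mid Z \sim \Pi_{X|Z}$ under $K^*$ and the resulting off-by-one shift (bounding $K_Z^{n-1}$ via $K^{*(n-2)}$ through the extra kernel $R$), which is precisely what forces the specific exponents in \eqref{eq:tvr1}. The stationary-law factorizations $\Pi = \Pi_Z\,\bar{K}$, $\Pi^* = \Pi_{XY}\,\bar{K}^*$, and the product structure of $\Pi^*$ are routine conditional-probability manipulations of the kind already carried out in the Introduction, so I anticipate no real difficulty there.
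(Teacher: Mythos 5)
Your proposal is correct and takes essentially the same approach as the paper: your kernel factorizations $(\nu_z K^{*(n-2)})R = K_Z^{n-1}(z,\cdot)$ and $K^{*n}\big((y,z,x),\cdot\big) = \big(\nu_{(x,z)}K_{XY}^{n-1}\big)\bar{K}^*$ are exactly the paper's displayed identities for $K_Z^n$ and $K^{*n}$, and your appeal to total-variation contraction under a common kernel is precisely what the paper carries out longhand via the representation $\norm{\mu-\nu}=\sup_{0\le f\le 1}\big|\int f\,d\mu-\int f\,d\nu\big|$. The only cosmetic differences are that you prove the first inequality of \eqref{eq:tvr1} directly rather than citing Lemma~2.4 of \citet{diac:khar:salo:2008}, and you package the paper's integral manipulations as explicit lemmas (product-form invariance under $K^*$, the shifted chain $W_k=(X^*_k,Y^*_{k+1})$), which changes nothing substantive.
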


\begin{proof}
We begin with \eqref{eq:tvr1}.  The first inequality follows
immediately from Lemma 2.4 of \citet{diac:khar:salo:2008}. Now, it is
straightforward to show that
\begin{align}
  \label{eq:1}
  & K^n_Z (z, \df z') \notag \\
= & \int_{\X} \int_{\Y} \int_{\Z} \int_{\X }\int_{\Y}
    \Pi_{Z|XY}(\df z'|x',y') \Pi_{Y|XZ}(\df y'|x',z'')
     K^{*(n-1)}\big( (x'',z), (\df y'', \df z'', \df x') \big) \Pi_{X|Z}( \df x''|z)
      \notag \\
= & \int_{\Y} \int_{\Z} \int_{\X }\int_{\Y}
    \Pi_{Z|XY}(\df z'|x',y') \Pi_{Y|XZ}(\df y'|x',z'')
    \int_{\X}  K^{*(n-1)}\big( (x'',z), (\df y'', \df z'', \df x') \big) \Pi_{X|Z}( \df x''|z) \notag \\
= & \int_{\Y} \int_{\Z} \int_{\X }\int_{\Y}
    \Pi_{Z|XY}(\df z'|x',y') \Pi_{Y|XZ}(\df y'|x',z'')
    \nu_z K^{*(n-1)}(\df y'', \df z'', \df x')  \;. 
\end{align}
Also,
\begin{align*}
  \Pi_Z(\df z')
  & = \int_\X \int_\Y \int_\Z \int_\Y
    \Pi_{Z|XY}(\df z'|x',y')
    \Pi_{Y|XZ}(\df y'|x',z'')
    \Pi_{YZ}(\df y'', \df z'')
    \Pi_{X|Z}(\df x'|z'') \\
  & = \int_\X \int_\Y \int_\Z \int_\Y
    \Pi_{Z|XY}(\df z'|x',y')
    \Pi_{Y|XZ}(\df y'|x',z'')
    \Pi^{\ast}(\df x', \df y'', \df z'') \;.
\end{align*}
Hence for the second inequality in \eqref{eq:tvr1},
\begin{align*}
 & \bnorm{K^n_Z(z, \cdot) - \Pi_Z(\cdot)}  \notag \\
= & \sup_{0\le f\le 1} \bigg| \int_{\Z} f(z') \int_{\Y} \int_{\Z} \int_{\X} \int_{\Y}
     \Pi_{Z|XY}(\df z'|x',y') \Pi_{Y|XZ}(\df y'|x',z'')
     \nu_z K^{*(n-1)}(\df y'', \df z'', \df x')  \\
  & \hspace*{10mm} - \int_{\Z} f(z') \int_{\Y} \int_{\Z} \int_{\X} \int_{\Y}
     \Pi_{Z|XY}(\df z'|x',y') \Pi_{Y|XZ}(\df y'|x',z'')
     \Pi^{\ast}(\df x', \df y'', \df z'')
    \bigg|  \\
= & \sup_{0\le f\le 1} \bigg| \int_{\Z} \int_{\X} \int_{\Y}
     \bigg[\int_{\Y} \int_{\Z} f(z') \Pi_{Z|XY}(\df z'|x',y') \Pi_{Y|XZ}(\df y'|x',z'')\bigg] \nu_z K^{*(n-1)}(\df y'', \df z'', \df x') \\
  & \hspace*{10mm} - \int_{\Z} \int_{\X} \int_{\Y}
     \bigg[\int_{\Y} \int_{\Z} f(z')  \Pi_{Z|XY}(\df z'|x',y') \Pi_{Y|XZ}(\df y'|x',z'')\bigg]
     \Pi^{\ast}(\df x', \df y'', \df z'')
    \bigg|  \\
= & \sup_{0\le f\le 1} \bigg| \int_{\Z} \int_{\X} \int_{\Y}
     \bigg[ \int_{\Y} \int_{\Z} f(z') \Pi_{Z|XY}(\df z'|x',y') \Pi_{Y|XZ}(\df y'|x',z'') \bigg]  \notag \\
  & \hspace*{70mm} \bigg[ \nu_z K^{*(n-1)}(\df y'', \df z'', \df x')
    - \Pi^{\ast}(\df x', \df y'', \df z'')  \bigg]
    \bigg|  \\
\le & \sup_{0\le g\le 1} \bigg|   \int_{\Z} \int_{\X} \int_{\Y} g(x', y'', z'')
     \bigg[ \nu_z K^{*(n-1)}(\df y'', \df z'', \df x')
            - \Pi^{\ast}(\df x', \df y'', \df z'')  \bigg]
    \bigg|  \notag \\
= & \bnorm{\nu_z K^{*(n-1)}(\cdot) - \Pi^{\ast}(\cdot)}  \;. \notag
\end{align*}
Now for \eqref{eq:tvr2}.  It is straightforward to show that
\begin{align}
  & K^{*n}\big( (y,z,x), (\df y', \df z',\df x') \big) \notag \\
= & \int_{\Y} \int_{\X}
  \Pi_{X|Z}(\df x'|z')
  \Pi_{Z|XY}(\df z'|x'', y')
  K_{XY}^{n-1}\big( (x, y''), (\df x'', \df y') \big)
  \Pi_{Y|XZ}(\df y''|x,z)  \label{eq:2}\\
= & \int_{\X}
  \Pi_{X|Z}(\df x'|z')
  \Pi_{Z|XY}(\df z'|x'', y')
  \bigg[ \int_{\Y} K_{XY}^{n-1}\big( (x, y''), (\df x'', \df y') \big)
  \Pi_{Y|XZ}(\df y''|x,z) \bigg] \notag \\
= & \int_{\X}
  \Pi_{X|Z}(\df x'|z')
  \Pi_{Z|XY}(\df z'|x'', y')
  \nu_{(x,z)} K_{XY}^{n-1} (\df x'', \df y')  \;. \notag
\end{align}
Also,
\[
  \Pi^*(\df y',\df z',\df x')
  = \Pi_{X|Z}(\df x'|z') \Pi_{YZ}(\df y',\df z')
  = \int_{\X}
    \Pi_{X|Z}(\df x'|z') \Pi_{Z|XY}(\df z'|x'',y') \Pi_{XY}(\df x'',\df y')  \;.
\]
Hence for the first inequality of \eqref{eq:tvr2},
\begin{align*}
& \bnorm{K^{\ast n}\big((y,z,x), \cdot \big) - \Pi^{\ast}(\cdot)}  \\
= & \sup_{0\le f\le 1} \bigg| \int_{\X} \int_{\Y} \int_{\Z}  f(x', y', z')
    \int_{\X} \Pi_{X|Z}(\df x'|z') \Pi_{Z|XY}(\df z'|x'', y')
    \nu_{(x,z)} K_{XY}^{n-1} (\df x'', \df y')\\
  & \hspace*{10mm} - \int_{\X} \int_{\Y} \int_{\Z}  f(x', y', z')
      \int_{\X} \Pi_{X|Z}(\df x'|z') \Pi_{Z|XY}(\df z'|x'', y')
      \Pi_{XY}(\df x'',\df y')
    \bigg|\\
= & \sup_{0\le f\le 1} \bigg| \int_{\X} \int_{\Y}
    \bigg[\int_{\Z} \int_{\X} f(x', y', z')\Pi_{X|Z}(\df x'|z') \Pi_{Z|XY}(\df z'|x'', y') \bigg]
    \nu_{(x,z)} K_{XY}^{n-1} (\df x'', \df y')\\
  & \hspace*{10mm} - \int_{\X} \int_{\Y}
    \bigg[ \int_{\Z} \int_{\X} f(x', y', z') \Pi_{X|Z}(\df x'|z') \Pi_{Z|XY}(\df z'|x'', y') \bigg]
      \Pi_{XY}(\df x'',\df y')
    \bigg|\\
= & \sup_{0\le f\le 1} \bigg| \int_{\X} \int_{\Y}
    \bigg[\int_{\Z} \int_{\X} f(x', y', z')\Pi_{X|Z}(\df x'|z') \Pi_{Z|XY}(\df z'|x'', y') \bigg] \\
    & \hspace*{65mm} \bigg[\nu_{(x,z)} K_{XY}^{n-1} (\df x'', \df y')-\Pi_{XY}(\df x'',\df y') \bigg]
    \bigg|\\
\le & \sup_{0\le g\le 1} \bigg| \int_{\X} \int_{\Y}   g(x'', y')
    \bigg[
      \nu_{(x,z)} K_{XY}^{n-1} (\df x'', \df y')
    - \Pi_{XY}(\df x'',\df y')
    \bigg]  \bigg|\\
= & \bnorm{\nu_{(x,z)} K_{XY}^{n-1}(\cdot) - \Pi_{XY}(\cdot)} \;,
\end{align*}
The second inequality follows from the fact that
\[
\nu_{(x,z)} K_{XY}^n(\df x',\df y')
= \int_{\Z} \nu_{(x,z)}
            K^{\dagger n}\big( \df z',(\df x',\df y') \big) \;.
\]
\end{proof}

In light of Proposition~\ref{prop:first}, it should not be surprising
that the block Gibbs Markov chain is geometrically ergodic if and only
if the out-of-order block Gibbs Markov chain is geometrically ergodic,
and that they have the same rate of convergence.  This is the subject
of our next result.

\begin{proposition}
  \label{prop:second}
  There are only two possibilities:
  (i) $\tilde{\Gamma}$ and $\Gamma^*$ are both geometrically ergodic
  with the same geometric convergence rate, or
  (ii) neither chain is geometrically ergodic.
\end{proposition}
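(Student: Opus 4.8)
The plan is to prove the sharper statement that $\tilde{\Gamma}$ and $\Gamma^*$ have \emph{equal} geometric convergence rates; the stated dichotomy is then immediate, since (as noted after \eqref{eq:ge}) a chain is geometrically ergodic exactly when its rate is strictly less than $1$. Write $\rho_{\tilde{\Gamma}}$ and $\rho_{\Gamma^*}$ for the two geometric convergence rates. I would establish $\rho_{\tilde{\Gamma}} \le \rho_{\Gamma^*}$ and $\rho_{\Gamma^*} \le \rho_{\tilde{\Gamma}}$ separately, in each case feeding a geometric-ergodicity bound for one chain into the appropriate chain of inequalities from Proposition~\ref{prop:first}. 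The one elementary tool needed throughout is the convexity bound $\norm{\mu P^m(\cdot) - \pi(\cdot)} \le \int \norm{P^m(u,\cdot) - \pi(\cdot)} \, \mu(\df u)$, valid for any probability measure $\mu$ and invariant $\pi$, which converts a pointwise rate bound into a bound for a mixed starting distribution.

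For the first inequality, suppose $\Gamma^*$ is geometrically ergodic and fix any $\rho > \rho_{\Gamma^*}$. By definition of the rate as an infimum there is a $\Pi^*$-integrable $M^*$ with $\norm{K^{*n}(u,\cdot) - \Pi^*(\cdot)} \le M^*(u) \rho^n$, and since the left-hand side does not depend on the $y$-coordinate of $u$ (the Remark), I may take $M^*$ to be a function of $(z,x)$ alone. Applying the convexity bound to $\nu_z$ and then the composite inequality \eqref{eq:tvr1} gives $\norm{K^n((x,y,z),\cdot) - \Pi(\cdot)} \le \tilde{M}(z) \rho^n$, where $\tilde{M}(z) = \rho^{-2} \int M^* \, \df\nu_z$. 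The only thing to check is that $\tilde{M}$ is $\Pi$-integrable. Since $\nu_z$ places $X \sim \Pi_{X|Z}(\cdot|z)$ with $Z = z$, and since $\Pi$ and $\Pi^*$ share the same $(X,Z)$-marginal, namely $\Pi_{X|Z}(\df x|z)\Pi_Z(\df z)$, the integral $\int \tilde{M} \, \df\Pi$ collapses to $\rho^{-2} \int M^* \, \df\Pi^* < \infty$. Hence $\rho_{\tilde{\Gamma}} \le \rho$ for every $\rho > \rho_{\Gamma^*}$, i.e. $\rho_{\tilde{\Gamma}} \le \rho_{\Gamma^*}$.

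For the reverse inequality I would exploit the fact, recorded before Proposition~\ref{prop:first}, that $K$ and $K^\dagger$ converge at the common rate $\rho_{\tilde{\Gamma}}$. Assuming $\tilde{\Gamma}$ is geometrically ergodic, fix $\rho > \rho_{\tilde{\Gamma}}$ and a $\Pi$-integrable $M^\dagger$ with $\norm{K^{\dagger n}(u,\cdot) - \Pi(\cdot)} \le M^\dagger(u) \rho^n$; as $K^\dagger(\cdot,\cdot)$ does not depend on its $z$-coordinate I may take $M^\dagger$ to depend only on $(x,y)$. Feeding this into the composite inequality \eqref{eq:tvr2} yields $\norm{K^{*n}((y,z,x),\cdot) - \Pi^*(\cdot)} \le M^*(z,x)\rho^n$ with $M^*(z,x) = \rho^{-1} \int M^\dagger \, \df\nu_{(x,z)}$. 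Again the substance is the integrability check: because $\nu_{(x,z)}$ draws $Y \sim \Pi_{Y|XZ}(\cdot|x,z)$ with $X = x$, and because $\Pi^*_{XZ} = \Pi_{XZ}$, the disintegration $\Pi_{Y|XZ}(\df y|x,z)\,\Pi_{XZ}(\df x,\df z) = \Pi(\df x,\df y,\df z)$ reduces $\int M^* \, \df\Pi^*$ to $\rho^{-1}\int M^\dagger \, \df\Pi_{XY} < \infty$. Thus $\rho_{\Gamma^*} \le \rho_{\tilde{\Gamma}}$, and combining the two inequalities gives $\rho_{\tilde{\Gamma}} = \rho_{\Gamma^*}$.

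The genuinely substantive steps are the two integrability verifications, and both turn on the same structural facts: that $\Pi$ and $\Pi^*$ agree on their $(X,Z)$-marginal, and that the redundant coordinate (the $y$ for $K^*$, the $z$ for $K^\dagger$) can be suppressed in the multiplier without destroying integrability. By contrast, the shifts of the time index by one or two in \eqref{eq:tvr1} and \eqref{eq:tvr2} are harmless: they only rescale the multiplier by a fixed power of $\rho$ and so leave the convergence rate untouched. I therefore expect the marginal-matching bookkeeping in the integrability arguments to be the main thing to get right, with the remainder being essentially mechanical.
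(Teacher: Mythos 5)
Your proof is correct, and it reaches the same destination by a recognizably different organization of the same ingredients. The key structural difference: the paper's proof of Proposition~\ref{prop:second} never actually invokes the inequalities \eqref{eq:tvr1} and \eqref{eq:tvr2}; instead it returns to the intermediate identities \eqref{eq:1} and \eqref{eq:2} and re-runs the function-class computation with the supremum pulled \emph{inside} the outer integral, arriving at pointwise-integrated bounds such as $\norm{K_Z^n(z,\cdot) - \Pi_Z(\cdot)} \le \int_{\X} \norm{K^{*(n-1)}((x,z),\cdot) - \Pi^*(\cdot)} \, \Pi_{X|Z}(\df x|z)$ and its mirror image with $K_{XY}^{n-1}$ and $\Pi_{Y|XZ}$ --- which are precisely your convexity bound specialized to $\nu_z$ and $\nu_{(x,z)}$. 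Your route instead treats Proposition~\ref{prop:first} as a black box and adds the generic lemma $\norm{\mu P^m - \pi} \le \int \norm{P^m(u,\cdot)-\pi} \, \mu(\df u)$, which is cleaner and makes the logical dependence on Proposition~\ref{prop:first} explicit. Two smaller divergences: in the first direction the paper transfers geometric ergodicity from $K^*$ to the $Z$-marginal chain and then invokes the cited rate-equality of $K$ and $K_Z$, whereas you go directly to $K$ via \eqref{eq:tvr1} (where the \citet{diac:khar:salo:2008} lemma is already embedded), so your first direction needs one fewer appeal to the marginal-rate results; in the second direction the paper uses the middle term of \eqref{eq:tvr2} (the $K_{XY}$ chain with multiplier $M_2(x,y)$) while you use the outer term ($K^\dagger$), both resting equally on the same cited rate-equality facts. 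Your integrability verifications are exactly the paper's: your $\tilde{M}(z)$ and $M^*(z,x)$ coincide with the paper's $M_1'(z)$ and $M_2'(x,z)$ up to the harmless factor $\rho^{-1}$ or $\rho^{-2}$, and both arguments turn on $\Pi^*_{XZ} = \Pi_{XZ}$ and the disintegration $\Pi_{Y|XZ}(\df y|x,z)\,\Pi_{XZ}(\df x,\df z) = \Pi(\df x,\df y,\df z)$. One point where you are actually more careful than the paper: you run the argument for every $\rho$ exceeding the rate and take the infimum, giving the rate \emph{equality} explicitly, while the paper fixes a single $\rho_1$ (resp.\ $\rho_2$) and concludes ``no larger than.'' Conversely, your suppression of the redundant coordinate in the multiplier ($y$ for $K^*$, $z$ for $K^\dagger$) deserves one more line than you give it --- e.g.\ define $M^*(z,x)$ by averaging $M$ over $y$ with respect to a conditional of $\Pi^*$, using that the total variation distance is $y$-free --- though the paper glosses the identical point by simply writing $M_1(x,z)$ from the outset, so this is not a gap relative to the paper's own standard of rigor.
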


\begin{proof}
First, assume that $\Gamma^*$ is geometrically ergodic.
That is, for all $n \in \mathbb{N}$ and all $(y,z,x) \in \Y \times \Z \times \X$,
\[
\bnorm{K^{*n}((y,z,x),\cdot) - \Pi^*(\cdot)} \le M_1(x,z) \rho_1^n
\]
where $\rho_1 \in [0,1)$ and $\int_\X \int_\Y \int_\Z M_1(x,z)
  \Pi^*(\df x, \df y, \df z) < \infty$.  We now show that this implies
  the geometric ergodicity of the $Z$-marginal.  It's easy to see that
\begin{align*}
  & \Pi_Z(\df z') \\
= & \int_{\X} \int_{\X} \int_{\Y} \int_{\Z} \int_{\Y}
  \Pi_{Z|XY}(\df z'|x',y') \Pi_{Y|XZ}(\df y'|x',z'')
  \Pi_{YZ}(\df y'', \df z'') \Pi_{X|Z}(\df x'|z'')
  \Pi_{X|Z}(\df x''|z) \\
= & \int_{\X} \int_{\X} \int_{\Y} \int_{\Z} \int_{\Y}
  \Pi_{Z|XY}(\df z'|x',y') \Pi_{Y|XZ}(\df y'|x',z'')
  \Pi^\ast(\df x', \df y'', \df z'')
  \Pi_{X|Z}(\df x''|z) \;.
\end{align*}
Combining this with \eqref{eq:1}, we have
\begin{align*}
  & \bnorm{K^n_Z(z,\cdot) - \Pi_Z(\cdot)} \\
= & \sup_{0\le f \le 1}  \bigg| \int_{\Z} f(z')
    \int_{\X} \int_{\X} \int_{\Y} \int_{\Z} \int_{\Y}
    \Pi_{Z|XY}(\df z'|x',y') \Pi_{Y|XZ}(\df y'|x',z'') \\
  & \hspace*{60mm} K^{\ast (n-1)} \big( (x'',z), (\df y'', \df z'', \df x') \big)
    \Pi_{X|Z}(\df x''|z) \\
  & \hspace*{10mm} - \int_{\Z} f(z')
      \int_{\X} \int_{\X} \int_{\Y} \int_{\Z} \int_{\Y}
      \Pi_{Z|XY}(\df z'|x',y') \Pi_{Y|XZ}(\df y'|x',z'')
      \Pi^*(\df x', \df y'', \df z'')
      \Pi_{X|Z}(\df x''|z) \bigg|  \\
= & \sup_{0\le f \le 1}  \bigg|
    \int_{\X} \int_{\X} \int_{\Y} \int_{\Z}
    \bigg[\int_{\Y} \int_{\Z} f(z') \Pi_{Z|XY}(\df z'|x',y') \Pi_{Y|XZ}(\df y'|x',z'') \bigg] \\
  & \hspace*{60mm} K^{\ast (n-1)} \big( (x'',z), (\df y'', \df z'', \df x') \big)
    \Pi_{X|Z}(\df x''|z) \\
  & \hspace*{10mm} -
      \int_{\X} \int_{\X} \int_{\Y} \int_{\Z}
      \bigg[ \int_{\Y} \int_{\Z} f(z') \Pi_{Z|XY}(\df z'|x',y') \Pi_{Y|XZ}(\df y'|x',z'')
      \bigg]
      \Pi^*(\df x', \df y'', \df z'')
      \Pi_{X|Z}(\df x''|z) \bigg| \\
\le & \sup_{0\le g \le 1}  \bigg|
    \int_{\X} \int_{\X} \int_{\Y} \int_{\Z} g(x', y'', z'') \\
    & \hspace*{30mm} \bigg[ K^{\ast (n-1)} \big( (x'',z), (\df y'', \df z'', \df x') \big)
         - \Pi^*(\df x', \df y'', \df z'')
    \bigg] \Pi_{X|Z}(\df x''|z) \bigg| \\
\le & \int_{\X}
    \bigg\{ \sup_{0\le g \le 1} \bigg|
    \int_{\X} \int_{\Y} \int_{\Z} g(x', y'', z'') \\
    & \hspace*{30mm} \bigg[ K^{\ast (n-1)} \big( (x'',z), (\df y'', \df z'', \df x') \big)
         - \Pi^*(\df x', \df y'', \df z'')
    \bigg] \bigg| \bigg\}
    \Pi_{X|Z}(\df x''|z) \\
= & \int_{\X} \bnorm{K^{\ast (n-1)}\big( (x'',z), \cdot \big) - \Pi^{\ast}(\cdot)}
    \Pi_{X|Z}(\df x''|z)   \;.
\end{align*}
% This yields the following upper bound:
% \[
% \norm{K_Z^n(z,\cdot) - \Pi_Z(\cdot)} \le \int_\X
% \norm{K^{*(n-1)}\big((x,z),\cdot \big) - \Pi^*(\cdot)} \Pi_{X|Z}(x|z)
% \, dx \;.
% \]
Now using the assumed geometric ergodicity of $K^*$, we have
\[
\bnorm{K_Z^n(z,\cdot) - \Pi_Z(\cdot)}
\le \rho_1^{n-1} \int_{\X} M_1(x,z) \Pi_{X|Z}(\df x|z)
= \frac{M'_1(z)}{\rho_1} \rho_1^n \;,
\]
where we have defined $M'_1(z) := \int_{\X} M_1(x,z) \, \Pi_{X|Z}(\df
x|z)$.  Now using the integrability of $M_1(x,z)$ with respect to
$\Pi^*(\df x,\df y,\df z)$, we have
\begin{align*}
\int_{\X} \int_{\Y} \int_{\Z} M_1(x,z) \,
\Pi^*(\df x,\df y,\df z)
& = \int_{\X} \int_{\Z} M_1(x,z) \, \Pi_{XZ}(\df x,\df z) \\
& = \int_{\Z} \bigg[ \int_{\X} M_1(x,z) \, \Pi_{X|Z}(\df x|z) \bigg] \Pi_Z(\df z)  \\
& = \int_{\Z} M'_1(z) \, \Pi_Z(\df z) \\
& < \infty \;.
\end{align*}
It follows immediately that the $Z$-marginal chain is geometrically
ergodic, and that its geometric rate of convergence is no larger than
$\rho_1$.
Again, the $Z$-marginal chain and the block Gibbs chain share
the same geometric rate of convergence.

Now assume that the block Gibbs sampler is geometrically ergodic.
This implies that the $(X,Y)$-marginal chain is also geometrically
ergodic with the same rate, so, for all $n \in \mathbb{N}$ and all
$(x,y) \in \X \times \Y$,
\[
\bnorm{K_{XY}^n \big((x,y),\cdot) - \Pi_{XY}(\cdot)}
\le M_2(x,y) \rho_2^n
\]
where $\rho_2 \in [0,1)$ and $\int_\X \int_\Y M_2(x,y) \Pi(\df x,\df y) < \infty$.
It's easy to see that
\begin{align*}
\Pi^*(\df y',\df z',\df x')
& = \Pi_{X|Z}(\df x'|z') \Pi_{YZ}(\df y',\df z') \\
& = \int_{\Y} \int_{\X}
    \Pi_{X|Z}(\df x'|z') \Pi_{Z|XY}(\df z'|x'',y') \Pi_{XY}(\df x'',\df y')
    \Pi_{Y|XZ}(\df y''|x,z)  \;.
\end{align*}
Combining this with \eqref{eq:2}, we have
\begin{align*}
 & \bnorm{K^{*n}\big( (y,z,x), \cdot \big) - \Pi^*(\cdot)} \\
 = & \sup_{0\le f \le 1} \bigg| \int_{\X} \int_{\Y} \int_{\Z} f(x',y',z')
     \int_{\Y} \int_{\X} \Pi_{X|Z}(\df x'|z') \Pi_{Z|XY}(\df z'|x'',y') \\
    & \hspace*{65mm} K_{XY}^{n-1}\big( (x, y''), (\df x'',\df y') \big)
     \Pi_{Y|XZ}(\df y''|x,z)\\
    & \hspace*{10mm} - \int_{\X} \int_{\Y} \int_{\Z} f(x',y',z')
     \int_{\Y} \int_{\X} \Pi_{X|Z}(\df x'|z') \Pi_{Z|XY}(\df z'|x'',y')
     \Pi_{XY}(\df x'',\df y')
     \Pi_{Y|XZ}(\df y''|x,z) \bigg| \\
= & \sup_{0\le f \le 1} \bigg|
     \int_{\Y} \int_{\X} \int_{\Y}
     \bigg[ \int_{\Z} \int_{\X} f(x',y',z') \Pi_{X|Z}(\df x'|z') \Pi_{Z|XY}(\df z'|x'',y') \bigg] \\
    & \hspace*{65mm} K_{XY}^{n-1}\big( (x, y''), (\df x'',\df y') \big)
     \Pi_{Y|XZ}(\df y''|x,z)\\
    & \hspace*{10mm} - \int_{\Y} \int_{\X} \int_{\Y}
     \bigg[ \int_{\Z} \int_{\X} f(x',y',z') \Pi_{X|Z}(\df x'|z') \Pi_{Z|XY}(\df z'|x'',y') \bigg]
     \Pi_{XY}(\df x'',\df y')
     \Pi_{Y|XZ}(\df y''|x,z) \bigg| \\
\le & \sup_{0\le g \le 1} \bigg| \int_{\Y} \int_{\X} \int_{\Y}  g(x'',y')
  \bigg[  K_{XY}^{n-1}\big( (x, y''), (\df x'',\df y') \big)
        - \Pi_{XY}(\df x'',\df y') \bigg]
    \Pi_{Y|XZ}(\df y''|x,z) \bigg| \\
\le & \int_{\Y}
  \bigg\{ \sup_{0\le g \le 1} \bigg|
  \int_{\X} \int_{\Y}  g(x'',y')
  \bigg[  K_{XY}^{n-1}\big( (x, y''), (\df x'',\df y') \big)
        - \Pi_{XY}(\df x'',\df y') \bigg] \bigg| \bigg\}
    \Pi_{Y|XZ}(\df y''|x,z)  \\
= &  \int_{\Y}
  \bnorm{K_{XY}^{n-1}\big((x, y''), \cdot \big) - \Pi_{XY}(\cdot)}
    \Pi_{Y|XZ}(\df y''|x,z) \;.
\end{align*}
% This yields the following upper bound:
% \[
% \norm{K^{*n}\big( (y,z,x), \cdot) - \Pi^*(\cdot)} \le \int_\Y \norm{
%   K_{XY}^{n-1}\big((x,y'),\cdot) - \Pi_{XY}(\cdot)}
% \Pi_{Y|XZ}(y'|x,z) \, dy' \;.
% \]
Now using the assumed geometric ergodicity of $K_{XY}$, we have
\[
\bnorm{K^{*n}\big( (y,z,x), \cdot) - \Pi^*(\cdot)}
\le \rho_2^{n-1} \int_{\Y} M_2(x,y') \Pi_{Y|XZ}(\df y'|x,z)
= \frac{M'_2(x,z)}{\rho_2} \rho_2^n \;,
\]
where we have defined $M'_2(x,z) := \int_\Y M_2(x,y') \Pi_{Y|XZ}(\df y'|x,z)$.
Using the integrability of $M_2(x,y)$ with respect to
$\Pi_{XY}(\df x,\df y)$, we have
\begin{align*}
\int_\X \int_\Y M_2(x,y) \, \Pi_{XY}(\df x,\df y)
& = \int_\X \int_\Y \int_\Z M_2(x,y) \, \Pi(\df x,\df y,\df z) \\
& = \int_\X \int_\Z
    \bigg[ \int_\Y M_2(x,y) \, \Pi_{Y|XZ}(\df y|x,z) \bigg]
    \Pi_{XZ}(\df x,\df z) \\
& = \int_\X \int_\Z M'_2(x,z) \,
    \Pi_{XZ}(\df x,\df z) \\
& < \infty \;.
\end{align*}
It follows immediately that the out-of-order chain is geometrically
ergodic, and that its geometric rate of convergence is no larger than
$\rho_2$.

\end{proof}

\vspace*{8mm}

\noindent {\bf \large Acknowledgment}.  The authors are grateful to
Qian Qin for helpful conversations.
\bigskip

\bibliographystyle{plainnat}
\bibliography{refs_ooo}

\begin{thebibliography}{10}
\providecommand{\natexlab}[1]{#1}
\providecommand{\url}[1]{\texttt{#1}}
\expandafter\ifx\csname urlstyle\endcsname\relax
  \providecommand{\doi}[1]{doi: #1}\else
  \providecommand{\doi}{doi: \begingroup \urlstyle{rm}\Url}\fi

\bibitem[Diaconis et~al.(2008)Diaconis, Khare, and
  Saloff-Coste]{diac:khar:salo:2008}
Persi Diaconis, Kshitij Khare, and Laurent Saloff-Coste.
\newblock {G}ibbs sampling, exponential families and orthogonal polynomials
  {\rm (with discussion)}.
\newblock \emph{Statistical Science}, 23:\penalty0 151--200, 2008.

\bibitem[Liu et~al.(1994)Liu, Wong, and Kong]{liu:wong:kong:1994}
J.~S. Liu, W.~H. Wong, and A.~Kong.
\newblock Covariance structure of the {G}ibbs sampler with applications to
  comparisons of estimators and augmentation schemes.
\newblock \emph{Biometrika}, 81:\penalty0 27--40, 1994.

\bibitem[Meyn and Tweedie(2009)]{meyn:twee:2009}
Sean~P. Meyn and Richard~L. Tweedie.
\newblock \emph{Markov Chains and Stochastic Stability}.
\newblock Cambridge University Press, Cambridge, U.K., 2 edition, 2009.

\bibitem[Nummelin(1984)]{numm:1984}
Esa Nummelin.
\newblock \emph{General Irreducible {M}arkov Chains and Non-negative
  Operators}.
\newblock Cambridge, London, 1984.

\bibitem[Roberts and Rosenthal(1997)]{robe:rose:1997}
Gareth~O. Roberts and Jeffrey~S. Rosenthal.
\newblock Geometric ergodicity and hybrid {M}arkov chains.
\newblock \emph{Electronic Communications in Probability}, 2:\penalty0 13--25,
  1997.

\bibitem[Roberts and Rosenthal(2001)]{robe:rose:2001}
Gareth~O. Roberts and Jeffrey~S. Rosenthal.
\newblock Markov chains and de-initializing processes.
\newblock \emph{Scandinavian Journal of Statistics}, 28:\penalty0 489--504,
  2001.

\bibitem[Roberts and Rosenthal(2004)]{robe:rose:2004}
Gareth~O. Roberts and Jeffrey~S. Rosenthal.
\newblock General state space {M}arkov chains and {MCMC} algorithms.
\newblock \emph{Probability Surveys}, pages 20--71, 2004.

\bibitem[Rosenthal(1996)]{rose:1996}
Jeffrey~S. Rosenthal.
\newblock Analysis of the {G}ibbs sampler for a model related to
  {J}ames-{S}tein estimators.
\newblock \emph{Statistics and Computing}, 6:\penalty0 269--275, 1996.

\bibitem[Tierney(1994)]{tier:1994}
Luke Tierney.
\newblock Markov chains for exploring posterior distributions {\rm (with
  discussion)}.
\newblock \emph{Annals of Statistics}, 22:\penalty0 1701--1728, 1994.

\bibitem[Yang and Rosenthal(2019)]{yang:rose:2019}
Jun Yang and Jeffrey~S. Rosenthal.
\newblock Complexity results for {MCMC} derived from quantitative bounds.
\newblock \emph{arXiv:1708.00829}, 2019.

\end{thebibliography}

\end{document}